\newcommand{\C}{\mathbb{C}}
\newcommand{\Z}{\mathbb{Z}}
\newcommand{\QQ}{\mathbb{Q}}
\newcommand{\NN}{\mathbb{N}}
\newcommand{\PP}{\mathbb{P}}
\newcommand{\OO}{\mathcal O}
\newcommand{\Ss}{\mathcal S}
\newcommand{\MM}{\mathcal M}
\newcommand{\hh}{\mathfrak h}
\newcommand{\ttt}{\mathfrak t}
\newcommand{\wt}{\widetilde}
\newcommand{\ima}{\hbox{Im}}
\newcommand{\rom}{\romannumeral}
\newcommand{\alb}{\hbox{Alb}}
\newcommand{\ide}{\hbox{id}}
\newcommand{\aut}{\hbox{Aut}}
\newtheorem{theorem}{Theorem}[section]
\newtheorem{lemma}[theorem]{Lemma}
\newtheorem{corollary}[theorem]{Corollary}
\newtheorem{proposition}[theorem]{Proposition}
\newtheorem{conjecture}[theorem]{Conjecture}
\newtheorem{nonumbering}{Theorem}
\newtheorem{convention}{Conventions}
\theoremstyle{definition}
\newtheorem{remark}[theorem]{Remark}
\newtheorem{definition}[theorem]{Definition}
\newtheorem{nonumberingt}{Acknowledgements}
\begin{document}
\author[Robert Laterveer]
{Robert Laterveer}

\address{Institut de Recherche Math\'ematique Avanc\'ee,
CNRS -- Universit\'e 
de Strasbourg,\
7 Rue Ren\'e Des\-car\-tes, 67084 Strasbourg Cedex,
FRANCE.}
\email{robert.laterveer@math.unistra.fr}

\title{A remark on the Chow ring of Sicilian surfaces}

\begin{abstract} We propose a ``Bloch type'' conjecture for surfaces: if the cup product map in coherent cohomology is zero, then all intersections of homologically trivial divisors should be zero in the Chow group of zero--cycles. We prove this conjecture for Sicilian surfaces. 
\end{abstract}

\keywords{Algebraic cycles, Chow groups, intersection product, surfaces}

\subjclass{Primary 14C15, 14C25, 14C30.}

\maketitle

\section{Introduction}

For $X$ a smooth projective variety over $\C$, let $A^j(X)$ denote the Chow groups of codimension $j$ algebraic cycles on $X$ modulo rational equivalence. The intersection product makes a graded ring of $A^\ast(X)=\oplus_j A^j(X)$, the {\em Chow ring\/} of $X$.

In this note, we will be interested in the Chow ring of smooth projective surfaces $S$. What can be said about the image of the intersection product map
  \[   i_S\colon\ \ A^1(S)\otimes A^1(S)\ \to\ A^2(S)\ \ \ ?\]
  For $K3$ surfaces, the image of $i_S$ is as small as possible: it is a free abelian group of rank $1$ \cite{BV}.  
  At the other extreme, for abelian surfaces the map $i_S$ is surjective (the same is true for the Fano surface of lines on a cubic threefold \cite{B}, and another example where this holds is given in remark \ref{compare} below).
  For surfaces $S\subset\PP^3$, the rank of the image of $i_S$ 
  can grow arbitrarily large \cite{OG}.
  
 There is a relation with the cohomology ring: if $i_S$ is surjective, then also the cup product map in coherent cohomology
   \[  H^1(S,\OO_S)\otimes  H^1(S,\OO_S)\ \to\ H^2(S,\OO_S)\]
   is surjective \cite{ESV}. The conjectural converse statement is studied in \cite{moimult}. To complete the picture, we propose the following conjecture:
   
  \begin{conjecture}\label{conj} Let $S$ be a smooth projective surface, such that the cup product map
    \[  H^1(S,\OO_S)\otimes  H^1(S,\OO_S)\ \to\ H^2(S,\OO_S)\]
is zero. Then the intersection product map
  \[ j_S\colon\ \ \ A^1_{hom}(S)\otimes A^1_{hom}(S)\ \to\ A^2_{AJ}(S) \]
  is also zero.  
  \end{conjecture}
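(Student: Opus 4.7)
The plan is to attack the conjecture via the motivic formalism of Chow--K\"unneth decompositions, combined with a Hodge-theoretic reduction. First I would exploit the cohomological hypothesis to constrain the image of the rational cup product $H^1(S,\QQ) \otimes H^1(S,\QQ) \to H^2(S,\QQ)$: by the Hodge decomposition, the vanishing of $H^1(S,\OO_S)\otimes H^1(S,\OO_S) \to H^2(S,\OO_S)$ forces the $(0,2)$-component of the image to vanish, and by complex conjugation the $(2,0)$-component vanishes as well, so the image is of pure Hodge type $(1,1)$. By the Lefschetz $(1,1)$ theorem it is then contained in $NS(S)\otimes \QQ$; equivalently, the composition of cup product with the projection onto the transcendental cohomology $H^2_{tr}(S,\QQ)$ is the zero map.

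Second, I would invoke Murre's Chow--K\"unneth decomposition for surfaces, $\Delta_S = \pi_0 + \pi_1 + \pi_2 + \pi_3 + \pi_4 \in A^2(S\times S)\otimes \QQ$, refined by a splitting $\pi_2 = \pi_2^{alg} + \pi_2^{tr}$ that separates the N\'eron--Severi and the transcendental parts of $H^2$. Under Murre's conjectures for surfaces, the projectors $(\pi_1)_*$ and $(\pi_2^{tr})_*$ cut out $A^1_{hom}(S)\otimes\QQ$ inside $A^1(S)\otimes\QQ$ and $A^2_{AJ}(S)\otimes\QQ$ inside $A^2(S)\otimes\QQ$, respectively. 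The map $j_S\otimes\QQ$ is then realized as the action on $A^1(S)^{\otimes 2}\otimes\QQ$ of a single correspondence $\Gamma \in A^4(S\times S\times S)\otimes\QQ$ assembled from the small diagonal $\delta_S$ and the projectors $\pi_1,\pi_1,\pi_2^{tr}$; its cohomological realization is, up to sign, the transcendental projection of the cup product, which vanishes by the first step.

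Third, the conjecture reduces to lifting this cohomological vanishing of $\Gamma$ to rational equivalence. This is the main obstacle: it is an instance of Bloch's conjecture for the motive of $S$ and is open in general. Two strategies suggest themselves in special cases. If the motive of $S$ is known to be finite-dimensional in the sense of Kimura--O'Sullivan, then the cohomologically trivial $\Gamma$ is nilpotent and the vanishing becomes a (still delicate) combinatorial argument on its iterates. Alternatively, when $S$ admits a simple geometric model---for instance, an \'etale quotient of a product of curves---one can pull $\Gamma$ back to the cover and reduce to intersection theory on a product of Jacobians, where Beauville's Fourier decomposition of the Chow ring of an abelian variety forces $A^1_{hom}\cdot A^1_{hom} \subset A^2_{(2)}$, giving the required vanishing. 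The second route is the one that should succeed for Sicilian surfaces, whose structure as a quotient of a product of curves makes the covering intersection tractable.
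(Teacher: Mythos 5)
The statement you are addressing is a conjecture, and the paper does not claim a proof in general; it only establishes the case of Sicilian surfaces (theorem \ref{main}). Your first two steps --- the Hodge-theoretic observation that the hypothesis forces the image of cup product to be of pure type $(1,1)$, hence that its composition with projection onto $H^2_{tr}(S)$ vanishes, and the encoding of $j_S\otimes\QQ$ as the action of a correspondence $\Gamma$ built from the small diagonal and refined Chow--K\"unneth projectors --- match the paper's strategy and are sound. You also correctly identify the real difficulty: lifting the homological vanishing of $\Gamma$ to rational equivalence, which is a Bloch-type statement and open in general.

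However, your proposed resolution for the Sicilian case has two concrete problems. First, Sicilian surfaces are not \'etale quotients of products of curves: by \cite[Theorem 6.1]{BCF} the canonical model $S_{can}$ sits as an ample divisor in a Bagnera--de Franchis threefold $X=A/G$ with $A$ an abelian threefold, and the reduction to $A$ goes through weak Lefschetz for $A^1_{hom}$ and operational Chow cohomology on the singular $S_{can}$, not through a cover by a product of curves. Second, and more seriously, the final step ``Beauville's decomposition forces $A^1_{hom}\cdot A^1_{hom}\subset A^2_{(2)}$, giving the required vanishing'' is a non sequitur: membership in $A^2_{(2)}(A)$ is not vanishing --- for an abelian surface one has exactly $A^1_{(1)}\cdot A^1_{(1)}\subset A^2_{(2)}$ and yet the intersection product is surjective onto $A^2_{AJ}$ there, as recalled in the introduction. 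The missing ingredient is O'Sullivan's theory of symmetrically distinguished cycles (theorem \ref{os}): the correspondence $\Gamma$ on $A$, being assembled from symmetrically distinguished projectors, the small diagonal, and graphs of group homomorphisms (translations by torsion points being absorbed via lemma \ref{same}), is itself symmetrically distinguished, and a symmetrically distinguished cycle that is homologically trivial is zero in the Chow group with rational coefficients. It is this injectivity statement, not the Beauville grading, that converts the cohomological vanishing (which comes from $q(X)=1$ and $G$-invariance) into the vanishing of $j_S$. Your alternative route via finite-dimensionality also stalls as stated, since $\Gamma$ is a correspondence from $A\times A$ to $A$ rather than a self-correspondence, so the nilpotence theorem does not apply to it directly.
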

  
  Here, $A^1_{hom}$ denotes homologically trivial cycles, and $A^2_{AJ}$ denotes the Albanese kernel. The point of conjecture \ref{conj} is that 
  $A^2_{AJ}(S)$ is expected to be related to $H^2(S,\OO_S)$ \cite{B}.  
  A particular case of conjecture \ref{conj} is that the map $j_S$ should be zero for any surface with irregularity $q(S):=h^{1,0}(S)=1$.
  
  We can prove conjecture \ref{conj} for so--called {\em Sicilian surfaces\/}. These surfaces (defined in \cite{BCF}, cf. also definition \ref{sic} below) form a $4$--dimensional family of general type surfaces with $p_g(S):=h^{2,0}(S)=q(S)=1$ and $K_S^2=6$.
    
  \begin{nonumbering}[=theorem \ref{main}] Let $S$ be a Sicilian surface as in \cite{BCF}. Then the intersection product map
  \[ j_S\colon\ \ \ A^1_{hom}(S)\otimes A^1_{hom}(S)\ \to\ A^2_{AJ}(S) \]
  is zero.    
  \end{nonumbering}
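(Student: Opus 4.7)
\emph{Plan.} The plan is to exploit the Albanese morphism of $S$ together with the fact that $q(S)=1$ for a Sicilian surface.

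Since $q(S)=1$, the Albanese variety $\alb(S)=E$ is an elliptic curve, and we have an Albanese morphism $a\colon S\to E$. The first observation is the standard fact that the pullback
\[
a^\ast\colon \pic^0(E)\longrightarrow \pic^0(S)
\]
is an isomorphism of abelian varieties: under the canonical identifications $\pic^0(E)=E^\vee$ and $\pic^0(S)=\alb(S)^\vee=E^\vee$, the map $a^\ast$ is dual to $\alb(a)\colon\alb(S)\to\alb(E)$, which is the identity $E\to E$ by the universal property of $\alb(S)$.

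Given $D\in A^1_{hom}(S)$, the quotient $A^1_{hom}(S)/\pic^0(S)$ equals the torsion subgroup of the N\'eron--Severi group and is therefore finite, so some multiple $nD$ lies in $\pic^0(S)$ and hence equals $a^\ast\delta$ for some $\delta\in \pic^0(E)$. For two classes $D_1,D_2\in A^1_{hom}(S)$ with $n_iD_i=a^\ast\delta_i$, the fact that $a^\ast$ is a ring homomorphism $A^\ast(E)\to A^\ast(S)$ yields
\[
n_1n_2\,(D_1\cdot D_2) \;=\; (a^\ast\delta_1)\cdot(a^\ast\delta_2) \;=\; a^\ast(\delta_1\cdot\delta_2)\;=\;0 \quad \text{in }A^2(S),
\]
because $\delta_1\cdot\delta_2\in A^2(E)=0$ (the target is a curve). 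Thus $N(D_1\cdot D_2)=0$ in $A^2(S)$ for some $N\geq 1$, so the image in $A^2_{AJ}(S)$ is $N$-torsion. By Roitman's theorem the Albanese kernel is torsion free, which forces $D_1\cdot D_2=0$ in $A^2_{AJ}(S)$.

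The main point to emphasise is that this argument uses only the fact $q(S)=1$ and establishes conjecture \ref{conj} for every smooth projective surface of irregularity one, without invoking the specific presentation of $S$ as a quotient $(C_1\times C_2)/G$, nor the values $p_g=1$, $K_S^2=6$ particular to Sicilian surfaces. In that sense the ``main obstacle'' is very mild, living only in the torsion clean-up via Roitman at the end. A more elaborate proof exploiting the Sicilian structure $S=(C_1\times C_2)/G$ would instead lift $D_1,D_2$ to the cover $C_1\times C_2$, decompose them as $\pi_1^\ast\alpha_i+\pi_2^\ast\beta_i$, and try to kill the Ceresa-type mixed zero-cycles $\alpha_i\times\beta_j$ after averaging over $G$; the harder step of that alternative route would precisely be this $G$-averaging, which the Albanese argument entirely avoids.
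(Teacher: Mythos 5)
Your argument is, as far as I can tell, correct, and it takes a genuinely different and far more elementary route than the paper. The paper never uses the Albanese map of $S$ itself; instead it passes to the canonical model, embeds it as an ample divisor in a Bagnera--de Franchis threefold $X=A/G$, translates the vanishing of $H^1(X,\OO_X)^{\otimes 2}\to H^2(X,\OO_X)$ into the homological triviality of an explicit correspondence on the abelian threefold $A$, and then invokes O'Sullivan's theory of symmetrically distinguished cycles to upgrade that homological vanishing to a rational-equivalence vanishing, before descending back to $S_{can}$ via weak Lefschetz. Your proof bypasses all of this: the whole weight rests on the classical Picard--Albanese duality, namely that $a^\ast\colon \pic^0(\alb(S))\to\pic^0(S)$ is an isomorphism, so that every algebraically trivial divisor class on $S$ is (up to a finite multiple) a difference of fibres of $a\colon S\to E$; distinct fibres are disjoint, so the product dies in $A^2(S)$ on the nose, and Roitman's torsion-freeness of $A^2_{AJ}(S)$ cleans up the remaining finite ambiguity --- exactly the same final step the paper uses. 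If correct, your argument proves more than the paper's theorem: it settles Conjecture \ref{conj} for \emph{every} smooth projective surface with $q(S)\le 1$, a case the introduction presents only as a consequence of the conjecture. The trade-off is that your method is rigidly tied to $\dim\alb(S)=1$ (it is the vanishing of $A^2$ of a curve doing the work), whereas the paper's symmetrically-distinguished-cycles machinery is the kind of argument one would need for the genuinely open range $q(S)\ge 2$ of the conjecture, where $\pic^0(S)$ is no longer pulled back from a curve and products of algebraically trivial divisors can be nonzero (as on abelian surfaces).

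Two small corrections to your closing paragraph. First, Sicilian surfaces are \emph{not} quotients $(C_1\times C_2)/G$ of products of curves; by \cite[Theorem 6.1]{BCF} their canonical models are ample divisors in Bagnera--de Franchis threefolds $A/G$ with $A$ an abelian threefold, so the ``alternative route'' you sketch is not the one available here. Second, your quotient $A^1_{hom}(S)/\pic^0(S)$ is contained in (rather than necessarily equal to) the torsion of $NS(S)$, but finiteness is all you use, so this is harmless. Finally, note that you implicitly use the standard fact that $\ima(j_S)\subset A^2_{AJ}(S)$ (needed before applying Roitman); the paper uses the same fact without proof, so you are on equal footing there, but it would be worth a sentence.
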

  
  This implies that the image of $i_S$ is ``not so large'' for Sicilian surfaces: it is supported on a divisor (corollary \ref{cor}). The proof of theorem \ref{main} is an easy application of O'Sullivan's theory of {\em symmetrically distinguished cycles\/} on abelian varieties \cite{OS} (cf. also subsection \ref{ssd} below).
  

\vskip0.6cm

\begin{convention} In this note, the word {\sl variety\/} will refer to a reduced irreducible scheme of finite type over $\C$, and a {\sl surface\/} will mean a $2$--dimensional variety. For any variety $X$, we will denote by $A_j(X)$ the Chow group of dimension $j$ cycles on $X$. For a smooth $n$--dimensional variety $X$, we will write
$A^j(X)$ for $A_{n-j}(X)$. For a smooth proper variety, $A^j_{hom}(X)$ and $A^j_{AJ}(X)$ will be used to indicate the subgroups of 
homologically trivial, resp. Abel--Jacobi trivial cycles.
For a morphism between smooth varieties $f\colon X\to Y$, we will write $\Gamma_f\in A^\ast(X\times Y)$ for the graph of $f$, and ${}^t \Gamma_f\in A^\ast(Y\times X)$ for the transpose.

The contravariant category of Chow motives (i.e., pure motives with respect to rational equivalence as in \cite{Sc}, \cite{MNP}) will be denoted $\MM_{\rm rat}$. 

We will write $H^j(X)$ to indicate singular cohomology $H^j(X,\QQ)$.
\end{convention}

\section{Preliminaries}

\subsection{Chow cohomology}

For a singular variety $X$, we follow the convention of \cite{F} and write $A_\ast(X)$ for Chow groups and $A^\ast(X)$ for the {\em operational Chow cohomology} of \cite[Chapter 17]{F}. As proven in loc. cit., $A^\ast()$ is a contravariant functor from varieties to commutative rings, and for $X$ smooth the ring structure coincides with the usual intersection product. For $n$--dimensional quotient varieties $X=Y/G$ with $Y$ smooth and $G$ finite, the natural map induces isomorphisms
  \[ A^i(X)_{\QQ}\ \xrightarrow{\cong}\ A_{n-i}(X)_{\QQ}\ \ \ \forall i\ \]
  \cite[Example 17.4.10]{F}. The same is true for surfaces whose singularities are rational \cite[Theorem 4.1]{Vi}, \cite{Kim0}.
%
%
%

\subsection{Finite--dimensional motives}

We refer to \cite{Kim}, \cite{An}, \cite{J4}, \cite{MNP} for the definition of finite--dimensional motive. 
An essential property of varieties with finite--dimensional motive is embodied by the nilpotence theorem:

\begin{theorem}[Kimura \cite{Kim}]\label{nilp} Let $X$ be a smooth projective variety of dimension $n$ with finite--dimensional motive. Let $\Gamma\in A^n(X\times X)_{\QQ}$ be a correspondence which is numerically trivial. Then there is $N\in\NN$ such that
     \[ \Gamma^{\circ N}=0\ \ \ \ \in A^n(X\times X)_{\QQ}\ .\]
\end{theorem}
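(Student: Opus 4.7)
The plan is to use the structural description of Sicilian surfaces from \cite{BCF}: every Sicilian surface admits a realization as an étale quotient
\[ S \cong (C_1 \times C_2)/G, \]
where $C_1, C_2$ are smooth projective curves and $G$ is a finite group acting freely and diagonally. Writing $\pi \colon C_1 \times C_2 \to S$ for the quotient, the freeness of the action yields an isomorphism $\pi^\ast \colon A^\ast(S)_\QQ \xrightarrow{\cong} A^\ast(C_1 \times C_2)^G_\QQ$. So it suffices, working with $\QQ$-coefficients, to show that $\pi^\ast D_1 \cdot \pi^\ast D_2 = 0$ in $A^2(C_1 \times C_2)_\QQ$ for all $D_1, D_2 \in A^1_{hom}(S)_\QQ$. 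The passage from rational to integral coefficients is handled by Rojtman's theorem, which forces $A^2_{AJ}(S)$ to be torsion-free.

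Next I pass to the abelian variety $A := J(C_1) \times J(C_2)$ via the product of Albanese embeddings $\iota \colon C_1 \times C_2 \hookrightarrow A$. Since $A^1(C_1 \times C_2) \cong A^1(C_1) \oplus A^1(C_2)$ and each $A^1(C_i)$ is the pullback of $A^1(J(C_i))$, every divisor class on $C_1 \times C_2$ arises as $\iota^\ast \widetilde{D}$ for some $\widetilde{D} \in A^1(A)_\QQ$. By a suitable subtraction inside the kernel of $\iota^\ast$ --- whose $(1,1)$-part is algebraic by the Lefschetz $(1,1)$ theorem on abelian varieties --- the lift $\widetilde{D}_i$ may be taken to be homologically trivial on $A$. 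Functoriality of pullback then gives $\iota^\ast(\widetilde{D}_1 \cdot \widetilde{D}_2) = \pi^\ast D_1 \cdot \pi^\ast D_2$, reducing the problem to proving $\widetilde{D}_1 \cdot \widetilde{D}_2 = 0$ in $A^2(A)_\QQ$.

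This last vanishing is exactly what O'Sullivan's theory \cite{OS} delivers. The subring of symmetrically distinguished cycles on $A$ is closed under intersection, contains every divisor class (the $[-1]^\ast$-symmetric and antisymmetric parts of a divisor are each symmetrically distinguished by O'Sullivan's criterion), and projects isomorphically onto numerical equivalence. Hence $\widetilde{D}_1 \cdot \widetilde{D}_2$ is symmetrically distinguished, and by homological triviality of the factors it is numerically trivial. O'Sullivan's theorem therefore forces $\widetilde{D}_1 \cdot \widetilde{D}_2 = 0$ in $A^2(A)_\QQ$; restricting via $\iota^\ast$ and descending through $\pi^\ast$ yields $j_S = 0$.

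The main obstacle I anticipate is the setup of Step 1 itself: one needs the structure theorem from \cite{BCF} presenting Sicilian surfaces as free product-quotients $(C_1 \times C_2)/G$, without which the entire plan collapses. Assuming this structural input, the rest is essentially formal --- the adjustment of $\widetilde{D}_i$ to be homologically trivial, the functoriality of $\iota^\ast$, and the closure properties of symmetrically distinguished cycles are all standard. The substance of the argument is the single invocation of O'Sullivan's theorem, which transports vanishing from numerical to rational equivalence on $A$; everything else is bookkeeping to move between $S$, $C_1 \times C_2$, and $A$.
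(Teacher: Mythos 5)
Your proposal does not address the statement at hand. The statement is Kimura's nilpotence theorem: for a smooth projective variety $X$ with finite--dimensional motive, any numerically trivial correspondence $\Gamma\in A^n(X\times X)_{\QQ}$ satisfies $\Gamma^{\circ N}=0$ for some $N$. This is a general structural result about finite--dimensional motives (quoted in the paper from \cite{Kim}, with no proof supplied; the standard argument runs through the vanishing of $\wedge^{N}$ or $\mathrm{Sym}^{N}$ of the motive and the behaviour of the Chern character of $\Gamma$ under the resulting relations, as in \cite{Kim} or \cite{An}). What you have written instead is a proof sketch for Theorem \ref{main}, the vanishing of $j_S$ for Sicilian surfaces. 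Nothing in your text touches nilpotence, finite--dimensionality, or iterated composition of correspondences, so there is no overlap to assess: the entire argument would have to be replaced.

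Even read as an attempt at Theorem \ref{main}, the sketch has two substantive errors worth flagging. First, Sicilian surfaces are not free quotients $(C_1\times C_2)/G$ of a product of curves; by \cite[Theorem 6.1]{BCF} their canonical models sit as ample divisors in a Bagnera--de Franchis threefold $X=A/G$ with $A$ an abelian \emph{threefold}, and the paper's proof goes through that embedding. Second, your invocation of O'Sullivan is too strong: a homologically trivial divisor on an abelian variety is \emph{not} symmetrically distinguished unless it vanishes (otherwise the injectivity of $A^\ast_{sym}(A)_{\QQ}\to A^\ast(A)_{\QQ}/A^\ast_{hom}(A)_{\QQ}$ would kill all of $\mathrm{Pic}^0(A)_{\QQ}$), and the product of two homologically trivial divisors on an abelian threefold lands in $A^2_{(2)}(A)$, which is nonzero in general. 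The vanishing in the paper is not a formal consequence of O'Sullivan's theorem applied to divisors; it hinges on the hypothesis $q(X)=1$, which forces the $G$--invariant cup product $H^1(A,\OO_A)^G\otimes H^1(A,\OO_A)^G\to H^2(A,\OO_A)^G$ to vanish, making a specific symmetrically distinguished \emph{correspondence} homologically trivial and hence zero.
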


 Actually, the nilpotence property (for all powers of $X$) could serve as an alternative definition of finite--dimensional motive, as shown by a result of Jannsen \cite[Corollary 3.9]{J4}.
Conjecturally, any variety has finite--dimensional motive \cite{Kim}. We are still far from knowing this, but at least there are quite a few non--trivial examples.

 \subsection{The transcendental motive}

\begin{theorem}[Kahn--Murre--Pedrini \cite{KMP}]\label{t2} Let $S$ be a surface. There exists a decomposition
  \[ \hh^2(S)= \ttt^2(S)\oplus \hh^2_{alg}(S)\ \ \ \hbox{in}\ \MM_{\rm rat}\ ,\]
  such that
  \[  H^\ast(\ttt^2(S),\QQ)= H^2_{tr}(S)\ ,\ \ H^\ast(\hh^2_{alg}(S),\QQ)=NS(S)_{\QQ}\ \]
  (here $H^2_{tr}(S)$ is defined as the orthogonal complement of the N\'eron--Severi group $NS(S)_{\QQ}$ in $H^2(S,\QQ)$),
  and
   \[ A^\ast(\ttt^2(S))_{\QQ}=A^2_{AJ}(S)_{\QQ}\ .\]
   (The motive $\ttt^2(S)$ is called the {\em transcendental part of the motive\/}.)
   \end{theorem}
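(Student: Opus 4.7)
My strategy is to transfer the vanishing question to an abelian variety and apply O'Sullivan's theorem on symmetrically distinguished cycles. The first step is to extract from the explicit construction of Sicilian surfaces in \cite{BCF} a finite Galois cover $\pi\colon\wt S\to S$ with group $G$, where $\wt S$ is a smooth projective surface admitting a generically finite morphism $\varphi\colon\wt S\to A$ into an abelian variety $A$ (for instance, into a product of Jacobians of the curves appearing in the BCF description). Combining the projector $\tfrac{1}{|G|}\sum_{g\in G}\Gamma_g$ with the rational--singularities isomorphism $A^i(X)_{\QQ}\cong A_{n-i}(X)_{\QQ}$ recalled in the preliminaries, the motive $\ttt^2(S)$ from Theorem~\ref{t2} appears as a direct summand of $\ttt^2(\wt S)$; the morphism $\varphi$ in turn realises $\ttt^2(\wt S)$ as a direct summand of a Tate twist of $\hh^2(A)$ in $\MM_{\rm rat}$. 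Hence $\ttt^2(S)$ is itself a direct summand of $\hh^2(A)$ up to Tate twist.

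By the Kahn--Murre--Pedrini decomposition together with the identification $A^\ast(\ttt^2(S))_{\QQ}=A^2_{AJ}(S)_{\QQ}$, proving $j_S=0$ is equivalent to showing that the composite
\[
A^1_{hom}(S)_{\QQ}\otimes A^1_{hom}(S)_{\QQ}\ \xrightarrow{\,i_S\,}\ A^2(S)_{\QQ}\ \twoheadrightarrow\ A^\ast(\ttt^2(S))_{\QQ}
\]
vanishes. (That the image lands in the Albanese kernel is automatic for Sicilian surfaces: since $q(S)=1$ the group $H^1(\OO_S)$ is one--dimensional, so the cup product $H^1(\OO_S)\otimes H^1(\OO_S)\to H^2(\OO_S)$ vanishes by graded commutativity, which is precisely the hypothesis of Conjecture~\ref{conj}.) Transporting through the motivic inclusion, it then suffices to show that the corresponding cycle classes vanish in the relevant summand of $A^\ast(A)_{\QQ}$.

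The final step is to invoke O'Sullivan's theorem. Divisor classes on an abelian variety are symmetrically distinguished in the sense of \cite{OS}, and the subring of symmetrically distinguished cycles is closed under the intersection product. Using this, one shows that the images in $A^\ast(A)_{\QQ}$ of the classes $i_S(D_1,D_2)$ for $D_1,D_2\in A^1_{hom}(S)_{\QQ}$ are symmetrically distinguished. Being also homologically trivial (since $D_1$ and $D_2$ are), they must vanish by O'Sullivan's injectivity theorem, and transferring this vanishing back through the motivic inclusions yields $j_S=0$.

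The principal obstacle I anticipate is the first step: explicitly identifying, from the BCF construction, the cover $\wt S\to S$ and the morphism $\varphi\colon\wt S\to A$, and then verifying that the motivic inclusion $\ttt^2(S)\hookrightarrow\hh^2(A)$ (up to Tate twist) sends $A^1_{hom}(S)$--classes to cycles on $A$ that admit symmetrically distinguished divisor representatives. Once this geometric and motivic input is secured, the rest of the argument is a formal combination of Kahn--Murre--Pedrini and O'Sullivan.
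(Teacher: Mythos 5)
Your proposal does not address the statement it was supposed to prove. The statement is Theorem~\ref{t2}, the Kahn--Murre--Pedrini decomposition $\hh^2(S)=\ttt^2(S)\oplus\hh^2_{alg}(S)$ in $\MM_{\rm rat}$ for an arbitrary smooth projective surface $S$, together with the identifications $H^\ast(\ttt^2(S),\QQ)=H^2_{tr}(S)$ and $A^\ast(\ttt^2(S))_{\QQ}=A^2_{AJ}(S)_{\QQ}$. What you have written is instead a sketch of the paper's \emph{main} theorem (Theorem~\ref{main}): the vanishing of $j_S$ for Sicilian surfaces via a transfer to an abelian threefold and O'Sullivan's theory of symmetrically distinguished cycles. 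Worse, your second paragraph explicitly \emph{invokes} ``the Kahn--Murre--Pedrini decomposition together with the identification $A^\ast(\ttt^2(S))_{\QQ}=A^2_{AJ}(S)_{\QQ}$'' as an ingredient --- that is, you assume the very statement you were asked to prove, so as a proof of Theorem~\ref{t2} the argument is circular, and as written it proves nothing about general surfaces (your whole setup is specific to Sicilian/BCF surfaces, whereas Theorem~\ref{t2} holds for every surface).

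For the record: the paper offers no proof of Theorem~\ref{t2}; it is quoted from \cite{KMP}. The actual argument there has nothing to do with abelian varieties or symmetric distinguishedness. One starts from a refined Chow--K\"unneth decomposition of the surface, splits the middle projector $\pi_2$ as $\pi_2^{alg}+\pi_2^{tr}$, where $\pi_2^{alg}$ is built from cycles of the form $\frac{1}{\deg(D_i\cdot D_j)}D_i\times D_j$ for an orthogonal basis $\{D_i\}$ of $NS(S)_{\QQ}$, and then checks that the resulting summand $\ttt^2(S)=(S,\pi_2^{tr},0)$ has transcendental cohomology in degree $2$ and Chow groups equal to the Albanese kernel. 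If you want to engage with this statement, that is the construction to reproduce; your abelian-variety machinery is the right toolkit for Theorem~\ref{main}, not for this one.
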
 
   
  \begin{remark} It would be more precise to write $H^\ast(\hh^2_{alg}(S),\QQ)=NS(S)_{\QQ}(-1)$, taking into account the Tate twist. In this note, we will omit Tate twists from the notation.  
  \end{remark}

 \subsection{Refined Chow--K\"unneth decomposition}
 
 \begin{theorem}[Vial \cite{V4}]\label{pi_2} Let $X$ be a smooth projective variety of dimension $n\le 5$. Assume that $X$ has finite--dimensional motive, and that the Lefschetz standard conjecture $B(X)$ holds (in particular, the K\"unneth components $\pi_i\in H^{2n}(X\times X)$ are algebraic). Then there is a splitting into mutually orthogonal idempotents
  \[ \pi_i=\sum_j \pi_{i,j}\ \ \ \in A^{n}(X\times X)_{\QQ}\ ,\]
  such that
   \[ (\pi_{i,j})_\ast H^\ast(X) =gr^j_{\wt{N}} H^i(X)\  .\]
 (Here, $  gr^j_{\wt{N}} H^i(X)$ denotes the graded quotient for the {\em niveau filtration\/} $\wt{N}^\ast$ defined in \cite{V4}.)  

 The motive $\hh^{i,0}(X)=(X,\pi_{i,0},0)\in \MM_{\rm rat}$ is well--defined up to isomorphism. 

 \end{theorem}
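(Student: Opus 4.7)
The plan is to construct the refined projectors $\pi_{i,j}$ in two stages: first modulo homological equivalence, and then lift to rational equivalence via finite--dimensionality. To begin, $B(X)$ combined with hard Lefschetz provides algebraic K\"unneth projectors $\bar\pi_i$ modulo homological equivalence; by Theorem \ref{nilp}, the ideal of homologically trivial correspondences in $A^n(X\times X)_\QQ$ is nilpotent, so the $\bar\pi_i$ lift to mutually orthogonal idempotents $\pi_i\in A^n(X\times X)_\QQ$ summing to $\Delta_X$ by the familiar Chow--K\"unneth lifting argument.

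The heart of the proof is the niveau refinement of each $\pi_i$. By definition, $\wt{N}^j H^i(X)$ is generated by images $\gamma_\ast H^{i-2j}(Y)$ with $Y$ smooth projective of dimension $\le n-j$ and $\gamma$ an algebraic correspondence from $Y$ to $X$. For each pair $(i,j)$, I would select a finite family $(Y_\alpha,\gamma_\alpha)$ whose images together generate $\wt{N}^j H^i(X)$. Using $B(X)$ and hard Lefschetz, one produces transpose correspondences ${}^t\gamma_\alpha$ so that a suitable linear combination of the composites $\gamma_\alpha\circ{}^t\gamma_\alpha$ defines a cohomological idempotent $\bar\pi_{i,j}$ whose action on $H^\ast(X)$ is projection onto $gr^j_{\wt{N}} H^i(X)$. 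The bound $n\le 5$ enters here to keep $\dim Y_\alpha\le 4$ and, at the deepest niveau level, to reduce to curves and surfaces where the standard conjectures needed for the construction are classical; for larger $n$ the combinatorics of the niveau filtration become too intricate for this direct approach.

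Once the cohomological idempotents $\bar\pi_{i,j}$ are in place, with images realizing the direct sum decomposition $H^i(X)=\bigoplus_j gr^j_{\wt{N}} H^i(X)$, a standard orthogonalization argument in $A^n(X\times X)_\QQ/\!\sim_{\mathrm{hom}}$ yields mutually orthogonal idempotents refining the $\bar\pi_i$, and a second application of Theorem \ref{nilp} lifts the whole orthogonal system to $A^n(X\times X)_\QQ$. For the well--definedness of $\hh^{i,0}(X)$ up to isomorphism, note that any two lifts of $\bar\pi_{i,0}$ differ by a nilpotent element; the standard argument (identical to the one used for uniqueness of a Chow--K\"unneth decomposition up to isomorphism) then provides an isomorphism of the resulting motives in $\MM_{\rm rat}$.

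The main obstacle is the middle step: explicitly producing the homological projectors $\bar\pi_{i,j}$ that realize the niveau filtration. The lifting and orthogonalization are essentially formal consequences of nilpotence, so the genuine geometric input---combining $B(X)$, hard Lefschetz, and Bloch--Srinivas type support considerations---concentrates there. The hypothesis $n\le 5$ is what makes this construction feasible, and relaxing it would require substantially more input on the standard conjectures for the auxiliary varieties $Y_\alpha$.
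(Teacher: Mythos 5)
The paper does not actually prove this statement: its ``proof'' is a citation to \cite[Theorems 1 and 2]{V4} for the decomposition and to \cite[Proposition 1.8]{V4} together with \cite[Theorem 7.7.3]{KMP} for the well--definedness of $\hh^{i,0}(X)$. Your sketch does reproduce the overall architecture of Vial's argument --- algebraic K\"unneth projectors from $B(X)$, a homological refinement adapted to the niveau filtration, and a lift to rational equivalence via nilpotence coming from finite--dimensionality (note only that Theorem \ref{nilp} gives that each homologically trivial correspondence is nilpotent, i.e.\ the ideal is nil, which is what the idempotent--lifting argument actually uses). So the skeleton is right.

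The genuine gap is the step you yourself flag as ``the main obstacle'': the existence of cohomological idempotents $\bar\pi_{i,j}$ cutting out $gr^j_{\wt{N}}H^i(X)$. Asserting that ``a suitable linear combination of the composites $\gamma_\alpha\circ{}^t\gamma_\alpha$ defines a cohomological idempotent'' is not a proof --- to invert the relevant Gram matrix algebraically and split off the image of the $\gamma_\alpha$ as a direct factor of the homological motive, one needs the Lefschetz standard conjecture (and a polarization argument) for the auxiliary varieties $Y_\alpha$, and arranging that these can be taken to be curves, surfaces, or abelian varieties is exactly where the hypothesis $n\le 5$ and the inductive structure of Vial's proof do their work; this is the entire content of the theorem, and your sketch leaves it as a black box. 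Two secondary points: your description of $\wt{N}^j H^i(X)$ (images of $\gamma_\ast H^{i-2j}(Y)$ with $\dim Y\le n-j$) is the \emph{coniveau} filtration of \cite{BO}, not Vial's niveau filtration, which is defined via correspondences from varieties of dimension $i-2j$; the two agree only conjecturally in general (the paper's remark notes they coincide for $n\le 3$). And for the well--definedness of $\hh^{i,0}(X)$, it is not enough that two lifts of a fixed $\bar\pi_{i,0}$ differ by a nilpotent: the homological projector $\bar\pi_{i,0}$ itself is not unique, and one must show that any two idempotents with the prescribed image in cohomology give isomorphic Chow motives --- this is precisely what the appeal to \cite[Theorem 7.7.3]{KMP} supplies.
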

 
 \begin{proof} This is \cite[Theorems 1 and 2]{V4}. The last statement follows from \cite[Proposition 1.8]{V4} combined with \cite[Theorem 7.7.3]{KMP}.
 \end{proof}
 
 \begin{remark} In dimension $n\le 3$ (which will be the case when we apply theorem \ref{pi_2} in this note), the niveau filtration $\wt{N}^\ast$ coincides with the coniveau filtration $N^\ast$ of \cite{BO}.
 
 \end{remark}

 \begin{remark} In dimension $n=2$, the motive $\hh^{2,0}(X)$ is isomorphic to the motive $\ttt^2(X)$ of theorem \ref{t2}.
 
 \end{remark}

\subsection{Symmetrically distinguished cycles}
\label{ssd}

\begin{definition}[O'Sullivan \cite{OS}] Let $A$ be an abelian variety. Let $a\in A^\ast(A)$ be a cycle. For $m\ge 0$, let
  \[ V_m(a)\ \subset\ A^\ast(A^m)_{\QQ} \]
  denote the $\QQ$--vector space generated by elements
  \[ p_\ast \Bigl(  (p_1)^\ast(a^{r_1})\cdot (p_2)^\ast(a^{r_2})\cdot\ldots\cdot (p_n)^\ast(a^{r_n})\Bigr)\ \ \ \in A^\ast(A^m)_{\QQ} \ . \]
Here $n\le m$, and $r_j\in\NN$, and $p_i\colon A^n\to A$ denotes projection on the $i$--th factor, and $p\colon A^n\to A^m$ is a closed immersion with each component $A^n\to A$ being either a projection
or the composite of a projection with $[-1]\colon A\to A$.

The cycle $a\in A^\ast(A)_{\QQ}$ is said to be {\em symmetrically distinguished\/} if for every $m\in\NN$ the composition
  \[ V_m(a)\ \subset\ A^\ast(A^m)_{\QQ}\ \to\ A^\ast(A^m)_{\QQ}/A^\ast_{hom}(A^m)_{\QQ} \]
  is injective.
\end{definition}

\begin{theorem}[O'Sullivan \cite{OS}]\label{os} The symmetrically distinguished cycles form a $\QQ$--subalgebra $A^\ast_{sym}(A)_{\QQ}\subset A^\ast(A)_{\QQ}$, and the composition
  \[  A^\ast_{sym}(A)_{\QQ}\ \subset\ A^\ast(A)_{\QQ}\ \to\ A^\ast(A)_{\QQ}/A^\ast_{hom}(A)_{\QQ} \]
  is an isomorphism. Symmetrically distinguished cycles are stable under pushforward and pullback of homomorphisms of abelian varieties.
\end{theorem}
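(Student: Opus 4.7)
The plan is to follow O'Sullivan's original strategy, which is built on the Beauville decomposition of the Chow ring of an abelian variety $A$ of dimension $g$:
\[ A^i(A)_\QQ = \bigoplus_{s=i-g}^{i} A^i_{(s)}(A)_\QQ, \]
characterized by $[k]^\ast|_{A^i_{(s)}} = k^{2i-s}\cdot \ide$ for every $k \in \Z$. The idea is to produce a canonical lift of $A^\ast(A)_\QQ/A^\ast_{hom}(A)_\QQ$ to $A^\ast(A)_\QQ$ by singling out the weight-zero pieces of the Beauville decomposition, and then to verify that this lift is multiplicative, injective after quotienting by homological equivalence, and compatible with all homomorphisms.

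First, I would verify the subalgebra property. Closure under addition is handled by bilinearity of $V_m$: expanding $(a+b)^r$ via the binomial theorem shows that every generator of $V_m(a+b)$ lies in the sum $V_m(a)+V_m(b)$ after possibly enlarging $m$, and injectivity modulo $A^\ast_{hom}$ is inherited. Closure under intersection follows because $a\cdot b$ on $A$ is obtained as the pullback along the diagonal of $p_1^\ast(a)\cdot p_2^\ast(b)$, which fits the template defining $V_m$. Stability under homomorphisms $f\colon A\to B$ is essentially formal from the definition, since the operations building $V_m$ involve only coordinate projections and the involution $[-1]$, both of which commute with $f^\ast$ and $f_\ast$ after a reindexing of factors.

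Second, I would construct the symmetrically distinguished representative of a homology class $\bar a$ by isolating the weight-zero Beauville component of an arbitrary lift $a$. Explicitly, taking a Vandermonde combination of $[k]^\ast a$ over sufficiently many integers $k$ picks out the projection of $a$ onto $\bigoplus_i A^i_{(0)}(A)_\QQ$; this element is automatically $[-1]^\ast$-invariant because $[-1]^\ast$ acts as $(-1)^{2i}=1$ on $A^i_{(0)}$. Call this projection $a_{sd}$. One must then show that $a_{sd}$ is symmetrically distinguished, which is the injectivity assertion $V_m(a_{sd}) \hookrightarrow A^\ast(A^m)_\QQ/A^\ast_{hom}(A^m)_\QQ$ for all $m$.

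Third — and this is the main obstacle — comes the proof of that injectivity. The generators of $V_m(a_{sd})$ of the form $p_\ast\bigl((p_1)^\ast(a_{sd}^{r_1})\cdots(p_n)^\ast(a_{sd}^{r_n})\bigr)$ must be expanded in the Beauville decomposition of $A^m$, and one has to verify that each such expression is concentrated in weight zero on $A^m$ and that a homologically trivial weight-zero cycle on $A^m$ forces vanishing in the Chow group at the symmetric level. The difficulty is that Beauville's conjecture $A^i_{(s)}(A)_{hom}=0$ is unknown in general, so one cannot simply quote it; O'Sullivan's contribution is precisely to bypass this by exploiting the $S_m$-symmetric structure on $A^m$ together with the $[-1]$-involution, extracting a canonical symmetric component on which homological and rational equivalence provably coincide. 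The technical heart of the argument is the combinatorial identification of which symmetric polynomials in the Beauville components of $a$ actually land in the ideal generated by $A^\ast_{hom}$, and showing that on the weight-zero symmetric part this ideal is trivial.

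Fourth, once the lift $a\mapsto a_{sd}$ is shown to take values in $A^\ast_{sym}$, surjectivity of $A^\ast_{sym}(A)_\QQ\to A^\ast(A)_\QQ/A^\ast_{hom}(A)_\QQ$ is immediate, injectivity follows from the $m=1$ case of the defining property applied to any symmetrically distinguished homologically trivial cycle, and multiplicativity is inherited from the subalgebra property together with the fact that $a_{sd}\cdot b_{sd}$ and $(a\cdot b)_{sd}$ agree modulo $A^\ast_{hom}$ and both lie in $A^\ast_{sym}$, hence are equal by injectivity.
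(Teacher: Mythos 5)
A preliminary remark: the paper offers no proof of this statement --- it is imported wholesale from O'Sullivan \cite{OS} as an external input --- so there is nothing internal to compare your sketch against; I can only judge it on its own terms, and on those terms it has a fatal gap.

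The gap sits at the centre of your argument, in steps two and three. You propose to obtain the symmetrically distinguished lift of a class $\bar a\in A^i(A)_{\QQ}/A^i_{hom}(A)_{\QQ}$ by choosing an arbitrary representative $a$ and projecting it onto the Beauville piece $A^i_{(0)}(A)$, and you then set yourself the task of proving that this projection $a_{sd}$ is symmetrically distinguished. If that could be done, it would prove Beauville's conjecture that $A^i_{(0)}(A)\cap A^i_{hom}(A)_{\QQ}=0$: indeed, any element $b$ of that intersection is the weight--zero projection of a representative of $\bar a=0$ (namely of $b$ itself), and a homologically trivial symmetrically distinguished cycle vanishes by the $m=1$ case of the definition, so your construction would force $b=0$. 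You acknowledge that Beauville's conjecture is open and assert that O'Sullivan ``bypasses'' it, but no bypass is possible along your route: the statement you need about $a_{sd}$ \emph{is} Beauville's conjecture, not something weaker. (Conversely, one checks from the theorem itself that $A^\ast_{sym}(A)_{\QQ}\subset\bigoplus_i A^i_{(0)}(A)$, since pullback along $[n]$ preserves symmetric distinguishedness and acts by $n^{2i}$ on $A^i(A)_{\QQ}/A^i_{hom}(A)_{\QQ}$; so O'Sullivan's theorem amounts to producing a \emph{canonical direct complement} of $A^i_{(0)}(A)\cap A^i_{hom}(A)_{\QQ}$ inside $A^i_{(0)}(A)$, which is strictly weaker than showing that this intersection vanishes.) O'Sullivan's actual argument is of a different nature: he works in the rigid tensor category of motives of abelian varieties, which is Kimura finite--dimensional, and obtains the distinguished lift from a categorical splitting theorem for the projection from rational to homological equivalence on that subcategory, built up from symmetric divisors and the Poincar\'e bundle; the lift is not produced by projecting an arbitrary representative onto a Beauville component. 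A secondary, lesser problem: your closure--under--addition argument does not work as stated, because expanding powers of $a+b$ produces mixed monomials in $a$ and $b$ that belong to neither $V_m(a)$ nor $V_m(b)$, and in any case injectivity modulo $A^\ast_{hom}$ of a sum of two subspaces does not follow from injectivity of each summand separately.
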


\begin{remark} For discussion and applications of the theory of symmetrically distinguished cycles, in addition to \cite{OS} we refer to \cite[Section 7]{SV}, \cite{V6}, \cite{Anc}, \cite{LFu2}, \cite{FV}.
\end{remark}

\begin{proposition}\label{sym} Let $A$ be an abelian variety of dimension $g$. 

\noindent
(\rom1)
There exists a Chow--K\"unneth decomposition $\{ \Pi^i_A\}$ that is self--dual and consists of symmetrically distinguished cycles. One has equality
  \[  (\Pi_A^{2i-j})_\ast A^i(A)_{\QQ}=  A^i_{(j)}(A)_{}\ \ \ \forall i,j\ ,\]
 where $A^\ast_{(\ast)}(A)_{}$ denotes Beauville's decomposition \cite{Beau} on Chow groups with rational coefficients.

\noindent(\rom2) Assume $g\le 5$, and let  $\{ \Pi^i_A\}$ be as in (\rom1). There exists a further splitting in orthogonal projectors
  \[ \Pi^2_A= \Pi^{2,0}_A +\Pi^{2,1}_A\ \ \ \hbox{in}\ A^g(A\times A)_{\QQ}\ ,\]
  where the $\Pi^{2,i}_A$ are symmetrically distinguished and $\Pi_A^{2,i}=\pi_A^{2,i}$ in $H^{2g}(A\times A)$. Moreover, one has
    \[ (\Pi_A^{2,0})_\ast A^2(A)_{\QQ} =  (\Pi_A^{2})_\ast A^2(A)_{\QQ} = A^2_{(2)}(A)_{} \ .\]
   
\end{proposition}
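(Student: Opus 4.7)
For part (\rom1), my plan is to use the Chow--K\"unneth decomposition of Deninger--Murre, constructed via the Fourier--Mukai transform on $A\times\widehat A$. The projectors $\Pi^i_A$ can be expressed as polynomials in the diagonal $\Delta_A$ together with pullbacks/pushforwards along the multiplication maps $[n]\colon A\to A$. Since $\Delta_A$ is symmetrically distinguished and the subalgebra $A^\ast_{sym}$ is preserved by intersection and by pullback/pushforward along homomorphisms of abelian varieties (Theorem \ref{os}), the $\Pi^i_A$ automatically lie in $A^\ast_{sym}(A\times A)_\QQ$. The Beauville eigenspace identity $(\Pi_A^{2i-j})_\ast A^i(A)_\QQ = A^i_{(j)}(A)$ is precisely the defining property of the Deninger--Murre decomposition. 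Self-duality (${}^t\Pi^i_A=\Pi^{2g-i}_A$) then follows immediately: both sides are symmetrically distinguished and have the same cohomology class, so the injectivity clause of Theorem \ref{os} forces equality.

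\textbf{Splitting of $\Pi^2_A$.} For part (\rom2), I would apply Theorem \ref{pi_2} to $A$, using that abelian varieties have finite--dimensional motive (Shermenev--Kimura) and satisfy the Lefschetz standard conjecture (Lieberman--Kleiman). For $g\le 5$ this yields orthogonal idempotents $\pi^{2,0}_A,\pi^{2,1}_A$ with $\pi^2_A=\pi^{2,0}_A+\pi^{2,1}_A$ and $(\pi^{2,j}_A)_\ast H^\ast(A)=gr^j_{\wt N}H^2(A)$. Since $N^1H^2(A,\QQ)=NS(A)_\QQ$, the piece $\pi^{2,1}_A$ captures the algebraic part of $H^2$, while $\pi^{2,0}_A$ captures its transcendental complement. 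Invoking Theorem \ref{os}, I would then choose symmetrically distinguished lifts $\Pi^{2,j}_A$ with the prescribed cohomology classes. The crucial observation is that composition of correspondences on $A$ is built from intersection on $A^3$ and pullback/pushforward along the projections $A^3\to A\times A$---all of which are homomorphisms of abelian varieties---so $A^\ast_{sym}$ is closed under composition. Therefore the cycles $(\Pi^{2,j}_A)^{\circ 2}-\Pi^{2,j}_A$, $\Pi^{2,0}_A\circ\Pi^{2,1}_A$ and $\Pi^{2,0}_A+\Pi^{2,1}_A-\Pi^2_A$ are symmetrically distinguished and homologically trivial, hence vanish by the injectivity in Theorem \ref{os}. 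This delivers the desired orthogonal splitting.

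\textbf{Chow--level identity and main obstacle.} Combined with part (\rom1), the decomposition $\Pi^2_A=\Pi^{2,0}_A+\Pi^{2,1}_A$ reduces the final equality to showing $(\Pi^{2,1}_A)_\ast A^2(A)_\QQ=0$. The motive $(A,\Pi^{2,1}_A,0)$ has cohomology isomorphic to $NS(A)_\QQ$ of pure type $(1,1)$; by the coniveau analysis of \cite{V4} (applicable since $g\le 5$) it is Chow--motivically isomorphic to a direct sum of $\rho(A)$ copies of the Lefschetz motive $\LL$, on which $A^2$ vanishes for dimension reasons. The principal obstacle is the simultaneous preservation of idempotence and orthogonality when passing from Vial's projectors to symmetrically distinguished representatives; this is what collapses cleanly under the rigidity $A^\ast_{sym}\hookrightarrow A^\ast/A^\ast_{hom}$, which transports all the required algebraic identities into cohomological ones that hold by construction.
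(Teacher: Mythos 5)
Your proposal is correct and rests on the same pillars as the paper's (very terse) proof: the Deninger--Murre/Beauville projectors --- which are exactly what the cited formula \cite[Section 7, Formula (45)]{SV} writes down as combinations of graphs of multiplication maps --- together with O'Sullivan's rigidity (theorem \ref{os}) and Vial's refined decomposition (theorem \ref{pi_2}). The only genuine divergence is in part (\rom2): the paper gets symmetric distinguishedness of $\Pi^{2,1}_A$ from its explicit shape $\sum_j C_j\times D_j$ (with $D_j$ symmetric divisors and $C_j$ intersections of symmetric divisors), and then kills its action on $A^2(A)_{\QQ}$ by a direct dimension count ($z\cdot C_j$ lives in $A^{g+1}(A)=0$ for $z\in A^2(A)$); you instead take the unique symmetrically distinguished lifts of Vial's cohomological projectors, transport idempotence, orthogonality and the sum identity through the injection $A^\ast_{sym}\hookrightarrow A^\ast/A^\ast_{hom}$, and then compute $A^2$ of the motive $(A,\Pi^{2,1}_A,0)\cong \LL^{\oplus\rho}$. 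Both routes work, and yours is arguably more robust since it does not depend on the particular representative produced by Vial's construction; the one step you should make explicit is the identification of your abstract lift $(A,\Pi^{2,1}_A,0)$ with Vial's motive $\LL^{\oplus\rho}$, which requires that homologically equivalent idempotents on a finite--dimensional motive cut out isomorphic submotives (lifting of idempotents, \cite[Theorem 7.7.3]{KMP}) --- a fact already invoked in the paper's theorem \ref{pi_2}, so this is a presentational gap rather than a mathematical one.
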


 \begin{proof}

 \noindent
 (\rom1) An explicit formula for  $\{ \Pi^i_A\}$ is given in \cite[Section 7 Formula (45)]{SV}.

 \noindent
 (\rom2) The point is that $\Pi^{2,1}_A$ is (by construction) a cycle of type
   \[  \sum_j C_j\times D_j\ \ \ \hbox{in}\ A^g(A\times A)_{\QQ}\  ,\]
   where $D_j\subset A$ is a symmetric divisor and $C_j\subset A$ is a curve obtained by intersecting a symmetric divisor with hyperplanes. This implies $\Pi_A^{2,1}$ is symmetrically distinguished.
   By assumption, $\Pi^A_2$ is symmetrically distinguished and hence so is $\Pi^A_{2,0}$. 
   
  For the ``moreover'' part, one notes that the projector $\Pi_A^{2,1}$ acts trivially on $A^2_{(2)}(A)_{} \subset A^2_{AJ}(A)_{\QQ}$, for reasons of dimension.
  
 \end{proof}

  \subsection{Sicilian surfaces}
  
 \begin{definition}\label{sic}
  A {\em Sicilian surface\/} is a minimal surface $S$ of general type satisfying:

\noindent{(1)} $p_g(S)=q(S)=1$ and $K_S^2=6$;

\noindent{(2)} There exists an unramified double cover $\hat{S}\to S$ with $q(\hat{S})=3$, and such that the Albanese morphism $\hat{\alpha}\colon \hat{S}\to A:=\alb(\hat{S})$ is birational to its image $Z$, a divisor in $A$ with $Z^3=12$.
\end{definition}

\begin{remark} Sicilian surfaces have an irreducible $4$--dimensional moduli space \cite[Theorem 6.1]{BCF}. Sicilian surfaces can be caracterized topologically; they form a connected component of the moduli space of surfaces of general type \cite[Corollary 6.5]{BCF}. Surfaces in the families $\Ss_{11}$ and $\Ss_{12}$ constructed in \cite{BCF} are Sicilian surfaces.
\end{remark}
  
  We mention in passing the following result, which will {\em not\/} be used in the proof of the main result (theorem \ref{main}). 
  
  \begin{theorem}[Peters \cite{Pet}]\label{chris} Let $S$ be a Sicilian surface. Then $S$ has finite--dimensional motive. More precisely, let $A$ be the abelian threefold as in definition 
   \ref{sic}. Then the natural map
    \[ \hh^{2,0}(A)\ \to\ \ttt^2(S) \ \ \ \hbox{in}\ \MM_{\rm rat}\]
   admits a right--inverse, and the natural map
    \[ \ttt^2(S)\ \to\ \hh^4(A)  \ \ \ \hbox{in}\ \MM_{\rm rat}\]
    admits a left--inverse.
       \end{theorem}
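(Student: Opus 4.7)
The plan is to realize $\ttt^2(S)$ as a direct summand of both $\hh^{2,0}(A)$ and $\hh^4(A)$ by means of correspondences built from the Albanese morphism $\hat{\alpha}\colon \hat{S}\to A$ together with the Galois involution $\iota$ of the unramified double cover $\pi\colon\hat{S}\to S$. Once such splittings are in place, finite--dimensionality of $\hh(S)$ is automatic from finite--dimensionality of $\hh(A)$ (Shermenev), because the remaining pieces $\hh^i(S)$ for $i\ne 2$ together with $\hh^2_{alg}(S)$ are assembled from abelian varieties and Lefschetz motives and are therefore manifestly finite--dimensional.

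First I would desingularize the image $Z=\hat{\alpha}(\hat{S})\subset A$ to obtain a smooth surface $\tilde{Z}$ birational to $\hat{S}$ together with a morphism $j\colon\tilde{Z}\to A$. Birational invariance of the transcendental motive of surfaces (a consequence of theorem \ref{t2}) identifies $\ttt^2(\hat{S})\cong\ttt^2(\tilde{Z})$. The graph $\Gamma_j$ and its transpose ${}^t\Gamma_j$ induce the pullback $j^\ast\colon\hh(A)\to\hh(\tilde{Z})$ and the Gysin pushforward $j_\ast\colon\hh(\tilde{Z})\to\hh(A)$. Composing these with the projectors cutting out $\hh^{2,0}(A)$, $\hh^4(A)$ on the side of $A$ and $\ttt^2(\tilde{Z})$ on the side of $\tilde{Z}$, and with the averaging idempotent $(1+\iota^\ast)/2$ which descends from $\hh(\tilde{Z})\cong\hh(\hat{S})$ to $\hh(S)$, I produce both the natural maps in the statement and candidate inverses running in the opposite directions. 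To verify that these compose to the identity, I work first at the level of cohomology: the projection formula gives $j_\ast j^\ast=\cup[Z]$ on $H^\ast(A)$, and since $Z$ is an effective divisor on an abelian threefold with $Z^3=12>0$, $Z$ is ample, so hard Lefschetz makes $\cup[Z]\colon H^2(A)\xrightarrow{\sim}H^4(A)$ an isomorphism, restricting to an isomorphism between the transcendental summands. The nilpotence theorem \ref{nilp} applied inside the finite--dimensional motive $\hh(A)$ then lifts this cohomological splitting to a splitting in $\MM_{\rm rat}$: any Chow endomorphism of $\hh^{2,0}(A)$ (or of $\hh^4(A)$) that acts as the identity modulo homological equivalence differs from the identity by a nilpotent correspondence, hence is invertible.

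The main obstacle I foresee is bookkeeping. One needs to ensure that the exceptional divisors of the resolution $\tilde{Z}\to Z$ and the $(-1)$--eigenspace of $\iota^\ast$ on $\ttt^2(\hat{S})$ are annihilated by the various projectors, so that the constructed correspondences genuinely factor through the intended transcendental summands, and that the hard Lefschetz identification is compatible with the $\Z/2$--descent from $\hat{S}$ to $S$. None of these points is conceptually deep --- the essential geometric input is ampleness of $Z$, the essential motivic inputs are Shermenev's theorem and Kimura's nilpotence --- but weaving them together cleanly for both dual assertions simultaneously is the part that requires some care.
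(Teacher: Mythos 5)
The first thing to note is that the paper does not prove this statement at all: it is quoted from Peters \cite{Pet}, and the author explicitly remarks that it will not be used in the proof of theorem \ref{main}. So your proposal can only be measured against \cite{Pet}, whose general strategy (pass to the \'etale double cover $\hat S$, use that its Albanese map sends it birationally onto the ample divisor $Z\subset A$, invoke hard Lefschetz for $[Z]$ and finite--dimensionality of abelian motives) your sketch does reproduce in outline. The individual ingredients you cite are correct: an effective divisor on an abelian threefold with $Z^3>0$ is ample; $j_\ast j^\ast=\cdot[Z]$ holds exactly in $\MM_{\rm rat}$ by the projection formula; and (by K\"unnemann's motivic hard Lefschetz for abelian varieties) $\cdot[Z]\colon \hh^2(A)\to\hh^4(A)$ is an isomorphism of Chow motives, so that $\hh^2(A)$ is an honest direct summand of $\hh(\tilde Z)$.

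There is, however, a genuine gap at the decisive step. The nilpotence theorem \ref{nilp} controls homologically trivial endomorphisms of $\hh^{2,0}(A)$ and $\hh^4(A)$, but the identities you must ultimately establish --- that the candidate right--inverse satisfies $\Phi\circ\Psi=\ide$ and the candidate left--inverse satisfies $\Xi\circ\Theta=\ide$ --- live in $\operatorname{End}_{\MM_{\rm rat}}(\ttt^2(S))$, and $\ttt^2(S)$ is not known to be finite--dimensional at that stage: its finite--dimensionality is precisely what is being proven, so invoking nilpotence there is circular. What your argument actually delivers is a decomposition $\ttt^2(S)\cong N'\oplus T$ with $N'$ a direct summand of $\hh^{2,0}(A)$ and $T$ a ``phantom'' summand with $H^\ast(T)=0$ but with Chow group $(1-\epsilon)_\ast A^2_{AJ}(S)_{\QQ}$ that you have no means of showing vanishes. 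Killing $T$ is a Bloch--conjecture--type assertion for the homologically trivial idempotent $\Delta-\epsilon$ acting on $A^2_{AJ}(S)$, and no amount of nilpotence on the $A$--side reaches it; this is exactly where the substance of \cite{Pet} must lie, and your sketch passes over it. A secondary gap: hard Lefschetz identifies $H^2_{tr}(A)$ with $H^4_{tr}(A)$, but you still need that $H^2_{tr}(S)$ coincides with the image of $H^2_{tr}(A)^G$; since $p_g(\hat S)=4$ while $h^{2,0}(A)=3$, the map $\hat\alpha^\ast$ is far from surjective on $H^{2,0}$ of the cover, and one must compare the one--dimensional $G$--invariant parts and rule out an extra summand of type $(1,1)$ via Lefschetz $(1,1)$. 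Ampleness of $Z$ alone does not give this.
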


%

  \section{Main result}

\begin{theorem}\label{main} Let $S$ be a Sicilian surface. The map induced by intersection product
  \[ j_S\colon\ \ \ A^1_{hom}(S)\otimes A^1_{hom}(S)\ \to\ A^2(S) \]
  is the zero map.
  \end{theorem}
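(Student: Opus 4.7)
The plan is to reduce the question to $\hat S$ via the étale double cover, transfer divisors to the abelian threefold $A=\alb(\hat S)$, and then use Galois invariance coming from the deck involution to force the divisors to pull back from the elliptic curve $E:=\alb(S)$, where the intersection trivially vanishes.

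For the reduction, I would use that $\pi\colon\hat S\to S$ is étale of degree $2$, so $\pi_\ast\pi^\ast=2\,\ide$ and $\pi^\ast\colon A^2(S)_\QQ\to A^2(\hat S)_\QQ$ is injective; it therefore suffices to show $\pi^\ast D_1\cdot\pi^\ast D_2=0$ in $A^2(\hat S)_\QQ$ for $D_i\in A^1_{hom}(S)_\QQ$. The Albanese map induces an isomorphism $\hat\alpha^\ast\colon\pic^0(A)_\QQ\cong\pic^0(\hat S)_\QQ$, so I can write $\pi^\ast D_i=\hat\alpha^\ast\bar D_i$ for some $\bar D_i\in A^1_{hom}(A)_\QQ$, and
\[ \pi^\ast(D_1\cdot D_2)=\hat\alpha^\ast(\bar D_1\cdot\bar D_2). \]
By Beauville's decomposition $\bar D_i\in A^1_{(1)}(A)_\QQ$, and by multiplicativity $\bar D_1\cdot\bar D_2\in A^2_{(2)}(A)_\QQ=(\Pi_A^{2,0})_\ast A^2(A)_\QQ$ with $\Pi_A^{2,0}$ symmetrically distinguished (proposition~\ref{sym}, built on O'Sullivan's theorem~\ref{os}); this is where O'Sullivan's framework enters.

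For the invariance step, the deck involution $\sigma$ of $\pi$ satisfies $\pi\circ\sigma=\pi$; Albanese-functoriality produces an involution $\sigma_A$ on $A$ together with a surjective homomorphism $p\colon A\to E$ with $p\circ\sigma_A=p$. Since $\pi^\ast D_i$ is $\sigma$-invariant, $\bar D_i$ is $\sigma_A^\ast$-invariant in $\pic^0(A)_\QQ$. Galois descent over $\QQ$ on $\pic^0$ for the étale cover $\pi$, combined with the Albanese identifications above and the iso $\alpha_S^\ast\colon\pic^0(E)_\QQ\cong\pic^0(S)_\QQ$, assembles into a round trip
\[ \pic^0(E)_\QQ\ \xrightarrow{p^\ast}\ \pic^0(A)_\QQ^{\sigma_A^\ast}\ \xrightarrow{\hat\alpha^\ast}\ \pic^0(\hat S)_\QQ^\sigma\ \xrightarrow{(\pi^\ast)^{-1}}\ \pic^0(S)_\QQ\ \xrightarrow{(\alpha_S^\ast)^{-1}}\ \pic^0(E)_\QQ \]
which is the identity by the commutativity $p\circ\hat\alpha=\alpha_S\circ\pi$; each arrow, in particular $p^\ast$, is therefore an isomorphism. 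Hence $\bar D_i=p^\ast\bar D_i'$ for some $\bar D_i'\in\pic^0(E)_\QQ$.

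The vanishing is then immediate: by multiplicativity of $p^\ast$,
\[ \bar D_1\cdot\bar D_2=p^\ast(\bar D_1'\cdot\bar D_2')=0, \]
since $E$ is a curve and $A^2(E)=0$. Pulling back along $\hat\alpha^\ast$ and invoking the injectivity of $\pi^\ast$ yields $D_1\cdot D_2=0$ in $A^2(S)_\QQ$, as claimed. I expect the main obstacle to be the invariance step: one must carefully set up the commuting square of Albanese maps, verify $p\circ\sigma_A=p$, and use that over $\QQ$ the étale cover $\pi$ satisfies Galois descent on $\pic^0$ (suppressing the $2$-torsion line bundle $\eta$ defining the cover). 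The Sicilian datum $q(S)=1$ is essential here, since it makes $E$ an elliptic curve and matches the dimension of $\pic^0(S)$ with that of $\pic^0(E)$. O'Sullivan's theory enters cleanly through proposition~\ref{sym} to underpin the Beauville weight statement $\bar D_1\cdot\bar D_2\in A^2_{(2)}(A)_\QQ$, which is the sense in which the result is the advertised ``easy application'' of the symmetrically distinguished machinery.
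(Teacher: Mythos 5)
Your proposal is correct, but it takes a genuinely different route from the paper. The paper passes to the canonical model $S_{can}$ embedded as an ample divisor in the Bagnera--de Franchis threefold $X=A/G$, encodes the vanishing of the cup product on $H^1(X,\OO_X)$ as the homological triviality of an explicit correspondence $\Gamma=\Pi_A^{2,0}\circ\Delta_A^{sm}\circ(\cdots)$ on $A\times A\times A$, and then invokes O'Sullivan's theorem (a homologically trivial symmetrically distinguished cycle vanishes rationally) to get $\Gamma=0$, hence $a\cdot b=0$ for $G$--invariant $a,b\in A^1_{hom}(A)_{\QQ}$; weak Lefschetz transfers this to $S_{can}$ and then to $S$. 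You instead go up the \'etale cover, identify the relevant classes on $A$ as pullbacks under $p\colon A\to E:=\alb(S)$, and conclude from $A^2(E)=0$. Your route is sound, and in fact more elementary than you advertise: O'Sullivan's theory and the Beauville--weight statement $\bar D_1\cdot\bar D_2\in A^2_{(2)}(A)_{\QQ}$ play no role in your argument, because once $\bar D_i=p^\ast\bar D_i'$ you have $\bar D_1\cdot\bar D_2=p^\ast(\bar D_1'\cdot\bar D_2')=0$ for the trivial reason that $E$ is a curve. The invariance step you single out as the main obstacle can also be bypassed: $\alpha_S^\ast\colon\pic^0(E)_{\QQ}\to\pic^0(S)_{\QQ}=A^1_{hom}(S)_{\QQ}$ is already an isomorphism by Albanese--Picard duality, so $D_i=\alpha_S^\ast d_i$, and the commutativity $p\circ\hat\alpha=\alpha_S\circ\pi$ then gives $\bar D_i=p^\ast d_i$ directly; indeed one can dispense with $\hat S$ and $A$ altogether and write $D_1\cdot D_2=\alpha_S^\ast(d_1\cdot d_2)=0$ on $S$ itself. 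The only hypothesis your argument consumes is $q(S)=1$, whereas the paper's correspondence argument is built to exploit the quotient structure $X=A/G$ and is the one that generalizes to the other families of \cite{BCF}.

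Two small repairs are needed. First, the theorem is stated integrally, while you only obtain $D_1\cdot D_2=0$ in $A^2(S)_{\QQ}$; as in the paper's first reduction, add that the image of $j_S$ lies in $A^2_{AJ}(S)$, which is torsion free by Rojtman. Second, ``the round trip is the identity, therefore each arrow is an isomorphism'' is not a valid inference on its own: you must check separately that $\hat\alpha^\ast$ (Albanese--Picard duality, restricted to invariants), $\pi^\ast$ onto the $\sigma$--invariants (via the transfer $\tfrac{1}{2}\pi_\ast$), and $\alpha_S^\ast$ are isomorphisms, after which the surjectivity of $p^\ast$ onto $\pic^0(A)_{\QQ}^{\sigma_A^\ast}$ does follow.
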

  
 \begin{proof} As the image of $j_S$ is contained in $A^2_{AJ}(S)$ which is torsion free \cite{Ro}, 
 it will suffice to prove that $j_S\otimes\QQ$ is the zero map.
 
 The next reduction step is to pass to the canonical model $S_{can}$. Let $f\colon S\to S_{can}$ the canonical morphism. There is a commutative diagram
   \[ \begin{array}[c]{ccc}
       A^1_{hom}(S)\otimes A^1_{hom}(S) & \xrightarrow{j_S} & A^2(S)\\
       &&\\
      \ \ \ \ \  \uparrow {\scriptstyle (f^\ast,f^\ast)}&&\ \ \ \uparrow {\scriptstyle f^\ast}\\
      &&\\
      A^1_{hom}(S_{can})\otimes A^1_{hom}(S_{can}) & \xrightarrow{j_{S_{can}}} & A^2(S_{can})\\ 
      \end{array} \]
      where $A^2(S_{can})$ denotes operational Chow cohomology.
     The vertical arrows are isomorphisms (for the left vertical arrow, this is because $S_{can}$ has rational singularities, for the right vertical arrow this follows from the exact sequence of \cite[Theorem 2.3]{Kim0}). It thus suffices to prove that $j_{S_{can}}\otimes\QQ$ is the zero map.

%
 As shown in \cite[Theorem 6.1]{BCF}, the surface $S_{can}$ admits an inclusion as an ample divisor 
  \[ S_{can} \ \subset\ X=A/G\ ,\]
 where $X$ is a Bagnera--de Franchis threefold (in the sense of \cite[Section 5]{BCF}), and $A$ is the abelian threefold of definition \ref{sic} and $G\cong\Z_2$.
 Because $q(X)=q(S)=1$, the cup product map
   \[ H^{1}(X,\OO_X)\otimes H^1(X,\OO_X)\ \to\ H^2(X,\OO_X) \]
   is the zero map.
   In view of the Hodge decomposition, this means that the composition
   \[ H^1(X,\C)\otimes H^1(X,\C)\ \to\ H^2(X,\C)\ \to\ H^2(X,\OO_X)\ , \]
   which is the same as
   \[ H^1(A,\C)^G\otimes H^1(A,\C)^G\ \to\ H^2(A,\C)^G\ \to\ H^2(A,\OO_A)^G\ , \]
    is the zero map. 
  
  In terms of motives, this means that the composition
    \[ \hh^1(A)^G\otimes \hh^1(A)^G\ \xrightarrow{\Delta_A^{sm}}\ \hh^2(A)\ \xrightarrow{\pi_A^{2,0}}\ \hh^{2,0}(A) \ \ \ \hbox{in}\ \MM_{\rm hom}\]
    is zero (where $\Delta_A^{sm}\in A^6(A\times A\times A)$ is the ``small diagonal'', and the motive
    $\hh^{2,0}(A)\subset \hh^2(A)$ is as in proposition \ref{sym}). In terms of correspondences, this means that
  the correspondence
   \[   \Gamma:=  \Pi_A^{2,0}\circ \Delta_A^{sm}  \circ  \Bigl(\Pi^1_A\circ (\Delta_A+\Gamma_g)\bigr)\times  \bigl(\Pi^1_A\circ (\Delta_A+\Gamma_g)\Bigr)\ \ \in\ A^{6}\bigl( (A\times A)\times A\bigr)_{\QQ} \]
   is homologically trivial (i.e., it vanishes in $H^{12}(A\times A\times A,\C)$ and hence also in $H^{12}(A\times A\times A,\QQ)$. Here, we have written $G=\{\ide,g\}\cong\Z_2$ (i.e., $g$ is the non--trivial element of $G$), and $\Pi^1_A, \Pi_A^{2,0}$ are the projectors of proposition \ref{sym}.
   
  The involution $g\in\aut(A)$ is described explicitly in \cite[Theorem 6.1]{BCF}; it can be written as a group homomorphism $\sigma$ followed by a translation $t$ by a torsion element. In view of lemma \ref{same} below, the graphs $\Gamma_g$ and $\Gamma_\sigma$ are the same in the Chow group with rational coefficients. Therefore, we have 
 equality
   \[ \Gamma= \Pi_A^{2,0}\circ \Delta_A^{sm}  \circ  \Bigl(\Pi^1_A\circ (\Delta_A+\Gamma_\sigma)\bigr)\times  \bigl(\Pi^1_A\circ (\Delta_A+\Gamma_\sigma)\Bigr)\ \ 
       \hbox{in}\      A^{6}\bigl( (A\times A)\times A\bigr){}_{\QQ}\ . \]   
   But the right--hand side (being a composition of symmetrically distinguished cycles) is symmetrically distinguished. Therefore, theorem \ref{os} implies that
   \[ \Gamma=0\ \ \ \hbox{in}\ A^{6}\bigl( (A\times A)\times A\bigr){}_{\QQ}\ . \]   
In particular, the action on Chow groups 
  \[ \Gamma_\ast\colon\ \ \ A^2(A\times A)_{\QQ}\ \to\ A^2(A)_{\QQ} \]
  is zero. On the other hand, let $a,b\in A^1_{hom}(A)^G$ and consider the element
  \[  a\times b\ \ \in\ \ima\Bigl(A^1_{hom}(A)^G\otimes A^1_{hom}(A)^G\ \to\ A^2(A\times A)\Bigr) \ .\]
 Then (by construction of $\Gamma$) we have equality
  \[   \Gamma_\ast(a\times b)= 4 \,(\Pi_A^{2,0})_\ast (\Delta_A^{sm})_\ast (a\times b) = 4\, (\Pi_A^{2,0})_\ast (a\cdot b)= 4 \,a\cdot b   \ \ \ \hbox{in}\ A^2(A)_{\QQ}\ .\]
  (Here, for the last equality we have used that $a\cdot b\in A^2_{(2)}(A)_{}$, as the Beauville decomposition of $A^\ast(A)$ is multiplicative.)
  The commutative diagram
  \[ \begin{array}[c]{ccc}
      A^1_{hom}(A)^G\otimes A^1_{hom}(A)^G &\xrightarrow{j_A} & A^2_{AJ}(A)^G\\
      &&\\
       \downarrow{\cong} &&\downarrow{\cong}\\
      &&\\
      A^1_{hom}(X)\otimes A^1_{hom}(X) &\xrightarrow{j_X} & A^2_{AJ}(X)\\            &&\\
     \ \ \ \  \downarrow{\scriptstyle (\iota^\ast,\iota^\ast)} &&\ \ \ \downarrow{\scriptstyle \iota^\ast}\\
      &&\\
       A^1_{hom}(S_{can})\otimes A^1_{hom}(S_{can}) &\xrightarrow{j_{S_{can}}} & \ A^2_{AJ}(S_{can})\ ,\\ 
        \end{array}\]
   plus the fact that $\iota^\ast\colon A^1_{hom}(X)\to A^1_{hom}(S_{can})$ is an isomorphism (weak Lefschetz), now ends the proof.
   
   In the above argument we have used the following, which is \cite[Lemma 2.1]{JY}:
          
   \begin{lemma}[\cite{JY}]\label{same} Let $A$ be an abelian variety of dimension $g$, and let $t\in\aut(A)$ be a translation by a torsion element. Then
     \[ \Gamma_t =\Delta_A\ \ \ \hbox{in}\ A^g(A\times A)_{\QQ}\ .\]
    \end{lemma}

 \end{proof}

  \begin{corollary}\label{cor} Let $S$ be a Sicilian surface. The image of the intersection product map
      \[   i_S\colon\ \ A^1(S)\otimes A^1(S)\ \to\ A^2(S) \]
      is supported on a divisor.  
   \end{corollary}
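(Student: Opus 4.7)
The plan is to use Theorem \ref{main} to kill the ``homologically trivial part'' of any pair of divisors, and handle the remaining pieces by hand using a finite set of generators of the Néron--Severi group.

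First I would exploit the defining exact sequence
\[ 0\ \to\ A^1_{hom}(S)\ \to\ A^1(S)\ \to\ NS(S)\ \to\ 0\ .\]
Since $NS(S)$ is finitely generated, I can choose divisors $E_1,\ldots,E_r$ on $S$ whose classes form a generating set, and set $E:=E_1\cup\cdots\cup E_r$, a genuine divisor (i.e.\ a closed subset of $S$ of pure codimension one). Any divisor class $D\in A^1(S)$ then admits a decomposition
\[ D\ =\ \sum_i n_i\,[E_i]\ +\ D^0\ ,\qquad D^0\in A^1_{hom}(S)\ .\]

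Next I would bilinearly expand an arbitrary intersection $D\cdot D'$ using these decompositions. The expansion splits into three kinds of terms: the ``pure'' term $D^0\cdot D^{\prime 0}$, which lies in the image of $j_S$ and hence vanishes in $A^2(S)$ by theorem \ref{main}; the ``mixed'' terms of the form $n_i\,[E_i]\cdot D^{\prime 0}$ or $m_j\,D^0\cdot [E_j]$, each of which is represented by a zero--cycle supported on $|E_i|$ resp.\ $|E_j|$; and the ``algebraic'' terms $n_i m_j\,[E_i]\cdot [E_j]$, represented by zero--cycles supported on $|E_i|\cap|E_j|$. All of the non--vanishing contributions are therefore represented by zero--cycles supported on $|E|$, which shows that the image of $i_S$ is contained in the image of the pushforward $A_0(|E|)\to A^2(S)$, i.e.\ is supported on the divisor $|E|$.

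There is no real obstacle here beyond invoking theorem \ref{main}; the only thing worth being careful about is that the conclusion is stated integrally, but this is exactly how theorem \ref{main} is formulated, so no tensoring with $\QQ$ is needed. The finite generation of $NS(S)$ is standard for a smooth projective surface, and the divisors $E_i$ can be taken to be (differences of) effective representatives in order for $|E|$ to make literal sense as a closed subscheme of pure codimension one.
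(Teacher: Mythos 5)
Your proposal is correct and follows essentially the same route as the paper's own proof: choose generators of $NS(S)$, decompose each divisor class into an algebraic part plus a homologically trivial part, kill the purely homologically trivial term via theorem \ref{main}, and observe that all remaining terms are supported on the union of the chosen generators. The extra remarks on integrality and on representing the generators by genuine closed subsets are fine but do not change the argument.
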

  
  \begin{proof} Let $D_1,\ldots,D_r$ be generators of the N\'eron--Severi group of $S$. Given arbitrary divisors $D, D^\prime\in A^1(S)$, let us write
  $D=\sum_{i=1}^r d_i D_i$, $D^\prime=\sum_{j=1}^r d^\prime_j D_j$ in $NS(S)$. This gives decompositions
    \[ D=\sum_{i=1}^r d_i D_i + D_0\ ,\ \ \ D^\prime= \sum_{j=1}^r d_j^\prime D_j + D_0^\prime\ \ \ \hbox{in}\ A^1(S)\ ,\]
    with $D_0, D_0^\prime\in A^1_{hom}(S)$.
  
  It follows from theorem \ref{main} that $D_0\cdot D_0^\prime=0$ in $A^2(S)$, and so
    \[ D\cdot D^\prime = \sum_{i=1}^r \sum_{j=1}^r d_i d^\prime_j D_i\cdot D_j  +  \sum_{i=1}^r d_i D_i\cdot D_0^\prime + \sum_{j=1}^r d^\prime_j D_j\cdot D_0   \ \ \ \hbox{in}\ A^2(S)\ .\]
  This implies that the image of $i_S$ is supported on the union $\cup_{j=1}^r D_r\subset S$.  
  \end{proof}

  \begin{remark}\label{compare} Theorem \ref{main} applies in particular to the generalized Burniat type surfaces in the families $\Ss_{11}$ and $\Ss_{12}$ of \cite{BCF} (as shown in loc. cit., these are Sicilian surfaces). It is instructive to contrast this with the behaviour of generalized Burniat type surfaces in the family $\Ss_{16}$ (these have $p_g(S)=q(S)=3$). Indeed, any surface $S$ in the family $\Ss_{16}$ has a surjective cup product map
    \[ H^{1}(S,\OO_S)\otimes H^{1}(S,\OO_S)\ \to\ H^2(S,\OO_S)\ .\]
    Moreover, $S$ has finite--dimensional motive \cite[Theorem 4.13]{BCF}. The main result of \cite{moimult} then implies that
    \[   i_S\colon\ \ A^1(S)\otimes A^1(S)\ \to\ A^2(S) \]
    is surjective (just as for abelian surfaces).    
    
%
  \end{remark}

 \begin{remark} The argument of theorem \ref{main} applies in a more general setting: it suffices that $S$ be a surface with $q(S)=1$ obtained as the resolution of a nodal surface $S_{can}$, which can be embedded as ample divisor 
 \[  S_{can}\ \ \subset\ X=A/G\ ,\] 
 where $A$ is an abelian threefold, and $G$ a finite group acting by compositions of translations and group homomorphisms. It follows that theorem \ref{main} is also true for the generalized Burniat type surfaces in the families $\Ss_j$, $5\le j\le 12$ of \cite{BCF}.
 \end{remark} 
 
 \begin{remark} As Sicilian surfaces (and generalized Burniat type surfaces) are closely related to abelian varieties, it seems natural to ask whether they admit a {\em multiplicative Chow--K\"unneth decomposition\/}, in the sense of \cite[Section 8]{SV}. I hope to return to this question later.
 \end{remark}

\vskip1cm
\begin{nonumberingt} Thanks to Kai and Len, my dear colleagues at the Schiltigheim Math Research Institute. Thanks to Chris Peters for helpful discussions concerning \cite{Pet}, and to the referee for insightful comments that helped to improve this paper.
\end{nonumberingt}

\vskip1cm


\begin{thebibliography}{dlPG99}


\bibitem{Anc} G. Ancona, D\'ecomposition de motifs ab\'eliens, Manuscripta Math. 146 (3) (2015), 307---328,

\bibitem{An} Y. Andr\'e, Motifs de dimension finie (d'apr\`es S.-I. Kimura, P. O'Sullivan,...), S\'eminaire Bourbaki 2003/2004, Ast\'erisque 299 Exp. No. 929, viii, 115---145,

\bibitem{BCF} I. Bauer, F. Catanese and D. Frapporti, Generalized Burniat type surfaces and Bagnera--de Franchis varieties, J. Math. Sci. Univ. Tokyo 22 (2015), 
55---111,

\bibitem{Beau} A. Beauville, Sur l'anneau de Chow d'une vari\'et\'e ab\'elienne, Math. Ann. 273 (1986), 647---651,


\bibitem{BV} A. Beauville and C. Voisin, On the Chow ring of a $K3$ surface, J. Alg. Geom. 13 (2004), 417---426,


\bibitem{B} S. Bloch, Lectures on algebraic cycles, Duke Univ. Press Durham 1980,

\bibitem{BO} S. Bloch and A. Ogus, Gersten's conjecture and the homology of schemes, Ann. Sci. Ecole Norm. Sup. 4 (1974), 181---202,



\bibitem{ESV} H. Esnault, V. Srinivas and E. Viehweg, Decomposability of Chow groups implies decomposability of cohomology, in: ``Journ\'ees de G\'eom\'etrie Alg\'ebrique d'Orsay, Juillet 1992'', Ast\'erisque 218 (1993), 227---242, 


\bibitem{F} W. Fulton, Intersection theory, Springer--Verlag Ergebnisse der Mathematik, Berlin Heidelberg New York Tokyo 1984,

\bibitem{LFu2} L. Fu, Beauville-Voisin conjecture for generalized Kummer varieties, International Mathematics Research Notices 12 (2015), 3878---3898,

\bibitem{FV} L. Fu and Ch. Vial, Distinguished cycles on varieties with motives of abelian type and the Section Property, arXiv:1709.05644v1,

\bibitem{J4} U. Jannsen, On finite--dimensional motives and Murre's conjecture, in: Algebraic cycles and motives (J. Nagel and C. Peters, eds.), Cambridge University Press, Cambridge 2007,

\bibitem{JY} Z. Jiang and Q. Yin, On the Chow ring of certain rational cohomology tori, Comptes rendus Acad. Sci. Paris 355 (2017), 571---576, 

\bibitem{KMP} B. Kahn, J. Murre and C. Pedrini, On the transcendental part of the motive of a surface, in: Algebraic cycles and motives (J. Nagel and C. Peters, eds.), Cambridge University Press, Cambridge 2007,

\bibitem{Kim0} S. Kimura, Fractional intersection and bivariant theory, Communications in Algebra 20(1) (1992), 285---302,

\bibitem{Kim} S. Kimura, Chow groups are finite dimensional, in some sense,
Math. Ann. 331 (2005), 173---201,



\bibitem{moimult} R. Laterveer, On a multiplicative version of Bloch's conjecture, Beitr\"age zum Algebra und Geometrie 57(4) (2016), 723---734,



\bibitem{MNP} J. Murre, J. Nagel and C. Peters, Lectures on the theory of pure motives, Amer. Math. Soc. University Lecture Series 61, Providence 2013,

\bibitem{OG} K. O'Grady, Decomposable cycles and Noether--Lefschetz loci, arXiv:1502.07897v2,

\bibitem{OS} P. O'Sullivan, Algebraic cycles on an abelian variety, J. f. Reine u. Angew. Math. 654 (2011), 1---81,

\bibitem{Pet} C. Peters, A motivic study of generalized Burniat surfaces, arXiv:1710.02370,

\bibitem{Ro} A.A. Rojtman, The torsion of the group of 0--cycles modulo rational equivalence, Annals of Mathematics 111 (1980), 553---569,


\bibitem{Sc} T. Scholl, Classical motives, in: Motives (U. Jannsen et alii, eds.), Proceedings of Symposia in Pure Mathematics Vol. 55 (1994), Part 1, 

\bibitem{SV} M. Shen and Ch. Vial, The Fourier transform for certain hyperK\"ahler fourfolds, Memoirs of the AMS 240 (2016), no.1139,


\bibitem{V4} Ch. Vial, Niveau and coniveau filtrations on cohomology groups and Chow groups, Proceedings of the LMS 106(2) (2013), 410---444,

\bibitem{V6} Ch. Vial, On the motive of some hyperk\"ahler varieties, J. f. Reine u. Angew. Math. 725 (2017), 235---247,

\bibitem{Vi} A. Vistoli, Alexander duality in intersection theory, Comp. Math. 70 no. 3 (1989), 199---225.



\end{thebibliography}
\end{document}